\documentclass[12pt]{amsart}

\usepackage[utf8]{inputenc}
\usepackage{amsthm,amsmath,amsfonts,amssymb,amscd,mathrsfs,latexsym}
\usepackage{enumerate,graphicx,cases,extarrows,indentfirst,tabularx,bm}
\usepackage{setspace} 
\usepackage{slashed}

\usepackage{geometry}
\usepackage[english]{babel}

\newcommand{\C}{\mathbb{C}}

\newcommand{\K}{\mathbb{K}}

\newcommand{\td}{\text{d}}
\newcommand{\Spec}{\text{Spec}}
\newcommand{\Res}{\text{Res}}

\newcommand{\PV}{\text{PV}}
\newcommand{\Tr}{\text{Tr}}

\newcommand{\bg}{\bm\gamma}

\newtheorem{theorem}{Theorem}[section]
\newtheorem{proposition}[theorem]{Proposition}

\newtheorem{remark}[theorem]{Remark}
\newtheorem{definition}[theorem]{Definition}
\newtheorem{example}[theorem]{Example}

\newtheorem{assumption}{Assumption}

\title{Commutative $BV_\infty$ algebras, their morphisms and $\frac{\infty}{2}$-variation of Hodge structures}
\author{Hao Wen}

\newcommand{\Addresses}{{
  \bigskip
  \footnotesize

  Hao Wen, \textsc{School of Mathematical Sciences and the Key Laboratory of Pure Mathematics and Combinatorics, Nankai University, Tianjin 300071, China}\par\nopagebreak
  \textit{E-mail address} \texttt{wenhao@nankai.edu.cn}

}}

\begin{document}
\maketitle

\begin{abstract}
	We study morphisms between commutative $BV_\infty$ algebras and show that, under suitable additional assumptions, a quasi-isomorphism of commutative $BV_\infty$ algebras induces an identification of $\frac{\infty}{2}$-variations of Hodge structures with polarizations, and consequently of Frobenius manifolds. An explicit example arising from singularity theory is provided to illustrate the result.
\end{abstract}


\section{Introduction}

\subsection*{Background}
Differential Gerstenhaber–Batalin–Vilkovisky (abbrev. dGBV) algebras play an important role in the B-model of mirror symmetry.
In \cite{BK,LLS,LW,M}, they are used to construct Frobenius manifold structure on the moduli spaces of deformations Calai-Yau geometries and Landau-Ginzburg models, starting from dGBV algebras of smooth Dolbeault polyvector fields.
These Frobenius manifold structures encode the genus-zero information of the corresponding models.
On the other hand, mirror symmetry features the well-known LG/CY correspondence conjecture. At genus zero, it predicts that a closely related pair consisting of a Calabi–Yau manifold and a Landau–Ginzburg model should induce isomorphic Frobenius manifolds. Since in both cases the Frobenius manifold structures arise from dGBV algebras, it is natural to expect that such an isomorphism is induced by a suitable morphism between the corresponding dGBV algebras.
It is shown in \cite{CZ} that a strict quasi-isomorphism preserving all the structures of two dGBV algebras indeed induces an isomorphism of Frobenius manifolds. 

The aim of the present paper is to generalize the work of \cite{CZ} and to lay the foundation for the study of the LG/CY correspondence via morphisms of dGBV algebras.
Given that dGBV algebras induce differential graded Lie algebras (abbrev. dgLAs), and that $L_\infty$ morphisms offer great flexibility in comparing dgLAs—as exemplified by Kontsevich’s celebrated formality theorem—it is natural to consider homotopy-theoretic structures in the dGBV setting.
Restricting to the commutative case, this perspective leads to commutative $BV_\infty$ algebras and $BV_\infty$ morphisms. The notion of a commutative $BV_\infty$ algebra was first introduced by O. Kravchenko in \cite{K}. A definition of morphisms between commutative $BV_\infty$ algebras was given in \cite{CL} under additional assumptions on the source algebra, while a general definition was given independently in \cite{B} and \cite{J}.

\subsection*{Results}
In this paper we analyze the conditions under which a commutative $BV_\infty$ algebra induces a Frobenius manifold, and a $BV_\infty$ quasi-isomorphism induces an isomorphism of Frobenius manifolds.

For the first part, we make use of an intermediate object, namely the $\frac{\infty}{2}$-variation of Hodge structure with polarization. This notion was introduced in \cite{Ba} to construct Frobenius manifolds associated with generalized deformations of compact Calabi–Yau manifolds. To construct such a structure, we assume that the commutative $BV_\infty$ algebra satisfies the Hodge-to-de Rham degeneration condition, and that there exist a flat pairing on a closely related flat bundle and a good basis.
For the second part, we analyze the conditions under which a $BV_\infty$ quasi-isomorphism induces an isomorphism of  $\frac{\infty}{2}$-variations of Hodge structure with polarization. We show that compatibility of the morphism with the pairing is sufficient.

Both parts rely on a careful study of the twisting of commutative $BV_\infty$ algebras and $BV_\infty$ morphisms by a Maurer–Cartan element. The twisting of the former was considered in \cite{CL}. The twisting of the latter was also discussed there, but only under additional assumptions on the source algebra of the morphism. To the author’s knowledge, the twisting of $BV_\infty$ morphisms in full generality has not appeared in the literature.

The definition of a $BV_\infty$ morphism is rather involved, which makes it difficult to present concrete examples. To illustrate how our results operate and to gain a preliminary understanding of their potential applications to the LG/CY correspondence, we work out explicitly in Section \ref{sec: example} the case of the dGBV algebra associated to the $A_1$ singularity.

\subsection*{Organizations}
In Section \ref{sec: ba and bm}, we recall the definitions of commutative $BV_\infty$ algebras and $BV_\infty$ morphisms.
In Section \ref{sec: twist}, we discuss the relationship between commutative $BV_\infty$ algebras and $L_\infty[1]$ algebras, and explain how these structures are twisted by a Maurer–Cartan element.
In Section \ref{sec: vhs}, we recall the definition of $\frac{\infty}{2}$-variation of Hodge structure and discuss how such a structure can be induced from a commutative $BV_\infty$ algebra.
In Section \ref{sec: BV quasim}, we show how a $BV_\infty$ quasi-isomorphism induces an isomorphism of $\frac{\infty}{2}$-variation of Hodge structure with polarization.
Finally, in Section \ref{sec: example}, we provide an illustrative example for the results.

\subsection*{Acknowledgments} This work is supported by the Young Scientists Fund of the National Natural
Science Foundation of China (Grant No. 12201314).

%
%

\section{Commutative $BV_\infty$ algebra and $BV_\infty$ morphism} \label{sec: ba and bm}

The notion of a commutative $BV_\infty$ algebra was first introduced in \cite{K}. It is a natural generalization of differential Gerstenhaber–Batalin–Vilkovisky (dGBV) algebra. Although it does not encompass the full operadic notion of a homotopy BV algebra, it nevertheless provides a natural algebraic framework for various applications in deformation quantization and mirror symmetry.

Let $\K$ be a field of characteristic $0$. Throughout this paper, all algebras are defined over $\K$ and all maps are assumed to be $\K$-linear.
Let $A$ be a $\mathbb{Z}$-graded commutative algebra with unit; let $m$ be an \emph{odd} integer and $\hbar$ be a formal parameter of degree $1-m$ (an even integer); let $A[[\hbar]]$ and $A((\hbar))$ be the algebra of $A$-valued formal power series and formal Laurent series respectively.

\begin{definition} \label{BV algebra}
	A \emph{degree $m$ commutative $BV_\infty$ algebra structure} on $A$ is the datum of a degree one map
	\begin{align*}
		\Delta : =\sum_{k=0}^\infty \Delta_k \hbar^k \in \text{End}_{\K[[\hbar]]}(A[[\hbar]])
	\end{align*}
	such that
	\begin{itemize}
		\item $\Delta(1_A) = 0$, where $1_A$ is the unit of $A$;
		\item $\Delta^2 = 0$;
		\item $\text{ad}_{\alpha_k} \cdots \text{ad}_{\alpha_1} (\Delta) (1_A) \equiv 0~(\text{mod } \hbar^{k-1})$ for all $k \geq 2$ and $\alpha_1,\cdots,\alpha_k \in A[[\hbar]]$, where $\text{ad}_{\alpha} (\Delta) := [\Delta,\alpha]$ is the graded commutator.
	\end{itemize}
	The tuple $(A,\Delta)$ is called a degree $m$ commutative $BV_\infty$ algebra.
\end{definition}

The second condition in Definition \ref{BV algebra} implies $\Delta_0^2 = 0$. We can define three closely related cohomology groups
\begin{align*}
	H(A,\Delta_0), \quad H(A[[\hbar]],\Delta), \quad \text{and} \quad H(A((\hbar)),\Delta).
\end{align*}
When $\alpha$ is $\Delta_0$ or $\Delta$-closed, its cohomology class will be denoted by $[\alpha]$.
The last condition, which we will later refer to as the Koszul condition, is equivalent to requiring that $\Delta_k$ be a differential operator of order at most  $k+1$:
\begin{align*}
	\text{ad}_{a_{k+2}} \cdots \text{ad}_{a_1} (\Delta_k) (1_A) = 0, \quad \forall a_1,\cdots,a_{k+2} \in A.
\end{align*}
In \cite{B}, this condition is formulated as the vanishing (modulo powers of $\hbar$) of the Koszul brackets of $\Delta$. In fact, let
\begin{align*}
	\mathcal{K}_n(\Delta)(\alpha_1,\cdots,\alpha_n) := \sum_{i=1}^n \sum_{\sigma \in S(i,n-i)} \pm_K (-1)^{n-i} \Delta(\alpha_{\sigma(1)} \cdots \alpha_{\sigma(i)}) \alpha_{\sigma(i+1)} \cdots \alpha_{\sigma(n)}
\end{align*}
be the Koszul brackets of $\Delta$,
where $S(i,n-i)$ denotes the set of $(i,n-i)$-unshuffles and $\pm_K$ denotes appropriate Koszul sign associated with the resulting permutation of $\alpha_1,\cdots,\alpha_n$, then
\begin{align*}
	\mathcal{K}_n(\Delta)(\alpha_1,\cdots,\alpha_n) = \text{ad}_{\alpha_k} \cdots \text{ad}_{\alpha_1} (\Delta) (1_A).
\end{align*}
Our degree assumption implies for $\alpha = a \hbar^k$, $|\alpha| := \deg(\alpha) = \deg(a) + k(1-m)$. Moreover, $\Delta_k$ has degree $1+k(m-1)$.
Note also that if $\Delta_k = 0$ when $k \geq 2$, then Definition \ref{BV algebra} reduce to that of dGBV algebra.

A definition of morphisms between commutative $BV_\infty$ algebras was given in \cite{CL} under the assumption that the source algebra is free.
The definition adopted in this paper was given independently in \cite{B} and \cite{J}.
Before writing it down, let's first introduce the notion of cumulant, a concept commonly used in probability theory, statistics and quantum physics.
Let $f: A\to B$ be a unital map between unital algebras, then the $n^{th}$ cumulant of $f$ is a map $\kappa_n(f): A^{\otimes n} \to B$ defined as
\begin{align*}
	\kappa_n(f)(a_1,\cdots ,a_n) := \sum_{
		\begin{subarray}{c}
			1\leq k \leq n, I_1,\cdots,I_k \neq \emptyset \\
			\substack I_1 \sqcup \cdots \sqcup I_k = \{1,\cdots,n\}
		\end{subarray}
	}
	(-1)^{k-1}(k-1)! \pm_K \prod_{1\leq j \leq k} f(\prod_{i \in i_j} a_i).
\end{align*}
Here the sum runs over the $C_n^k$ non-ordered partitions of $\{1,\cdots,n\}$ into the disjoint union of
$k$ non-empty subsets $I_1,\cdots,I_k$ and $\pm_K$ denotes again appropriate Koszul sign associated with the resulting permutation of $a_1,\cdots,a_n$.
By definition, $\kappa_1(f) = f$.
For our later use, we give an useful identity on cumulants as follows. For $a_1,\cdots,a_n \in A$, let $J_1,\cdots,J_n$ be formal parameters with $|J_i| = - |a_i|$ so that $\sum_i J_i \alpha_i$ has degree zero.
By elementary combinatorial arguments, we may write
\begin{align}
	\kappa_n(f)(a_1,\cdots,a_n) = \frac{\partial^n}{\partial J_1 \cdots \partial J_n} \big|_{J_1 = \cdots =J_n = 0} \log f(e^{\sum_{i=1}^n J_i a_i}),
\end{align}
where $\frac{\partial}{\partial J_i}$ denotes the usual derivation, and acts by eliminating $J_i$ when $|J_i|$ is odd.
Note that only finitely many terms in the power series expansion on the right-hand side contribute, so convergence is not an issue.
When all the $a_i$ are of degree zero, this recovers the original definition of cumulants in probability theory.

\begin{definition} \label{morphism}
	Given a pair of degree $m$ commutative $BV_\infty$ algebras $(A,\Delta)$ and $(B,\Delta')$, a $BV_\infty$ morphism between them is a degree zero map
	\begin{align*}
		f := \sum_{k=0}^\infty f_k\, \hbar^k \in \text{Hom}_{\K[[\hbar]]}(A[[\hbar]],B[[\hbar]])
	\end{align*}
	satisfying
	\begin{itemize}
		\item $f(1_A) = 1_B$, \label{unital}
		\item $f \circ \Delta = \Delta' \circ f$, \label{chain map}
		\item $\kappa_n(f)(\alpha_1,\cdots,\alpha_n) \equiv 0 ~ (\text{mod } \hbar^{n-1}) ~ \text{for all } n\geq 2 ~\text{and } \alpha_1,\cdots,\alpha_n \in A[[\hbar]]$. \label{cumulant condtion}
	\end{itemize}
	When $f= f_0$ has no higher order terms in $\hbar$, we say $f$ is strict.
\end{definition}
The degree assumption implies $f_k$ has degree $k(m-1)$.
The second condition can be written as an infinite series of identities, the first two of which are
\begin{align*}
	f_0\, \Delta_0 &= \Delta_0' \, f_0, \\
	f_0 \, \Delta_1 + f_1 \, \Delta_0 &= \Delta_0' \, f_1 + \Delta_1' \, f_0.
\end{align*}
It implies there are maps between cohomologies
\begin{align*}
	f_0: H(A,\Delta_0) \to H(B,\Delta'_0)
\end{align*}
and
\begin{align*}
	f: H(A[[\hbar]],\Delta) \to H(B[[\hbar]],\Delta').
\end{align*}
We say that $f$ is a $BV_\infty$ quasi-isomorphism if $f_0$ is a quasi-isomorphism.
The last condition, which will be called cumulant condition later, is more complicated.
It describe the compatibility between $f_k$'s and the algebra (i.e., product) structure of $A$ and $B$.
It can be written as an infinite series of identities, the first two of which reads
\begin{align*}
	f_0(ab) &= f_0(a) f_0(b), \\
	f_1(abc) &= f_1(ab)  f_0(c) + (-1)^{|b||c|}f_1(ac)  f_0(b) + (-1)^{|a|(|b|+|c|)}f_1(bc)  f_0(a) \\
	&- f_1(a) f_0(b)  f_0(c) - (-1)^{|a||b|} f_1(b) f_0(a)  f_0(c) - (-1)^{|c|(|a|+|b|)}f_1(c) f_0(a) f_0(b).
\end{align*}
In particular, $f_0$ preserves the product structure.
However, the second identity — together with all subsequent ones — makes it quite difficult to determine whether a given map $f$ is a $BV_\infty$-morphism.

\section{Maurer-Cartan elements and twisting procedure} \label{sec: twist}

Let $(A,\Delta)$ be a degree $m$ commutative $BV_\infty$ algebra.
Because of the Koszul condition, for every $n \geq 1$, the operator
\begin{align*} 
	\mu_n(\alpha_1,\cdots,\alpha_n) := \frac{1}{\hbar^{n-1}} \mathcal{K}_n(\Delta)(\alpha_1,\cdots,\alpha_n),
\end{align*}
defines a map $\mu_n:A[[\hbar]]^{\otimes n} \to A[[\hbar]]$ of degree $1+(n-1)(m-1)$.
Moreover, we can define $l_n:A^{\otimes n} \to A$, which is the classical limit of $\mu_n$, by
\begin{align*}
	l_n(a_1,\cdots,a_n) := \mu_n(a_1,\cdots,a_n)|_{\hbar = 0}.
\end{align*}
By Remark 2.3 of \cite{B} and Proposition 3.15 of \cite{BL}, $(A[[\hbar]][1-m],\{\mu_i\})$ and $(A[1-m],\{l_i\})$ are both $L_\infty[1]$ algebras. Here we use the notation $(V[1-m])^i= V^{i+1-m}$ for a graded vector space $V$.
By Equation (13) of \cite{B}, for a degree $1-m$ element $\bm\gamma \in A[[\hbar]]$ we have
\begin{align*}
	\hbar \Delta e^{\bm\gamma/\hbar} = \left(\sum_{i=1}^\infty \frac{1}{i!}\mu_i(\bm\gamma,\cdots,\bm\gamma) \right) e^{\bm\gamma/\hbar}.
\end{align*}
Such $\bm\gamma$ is called a Maurer-Cartan (abbrev. MC) element if it satisfies
\begin{align} \label{MC equation}
	\sum_{i=1}^\infty \frac{1}{i!}\mu_i(\bm\gamma,\cdots,\bm\gamma)=0
\end{align}
or equivalently
\begin{align} \label{MC}
	\Delta e^{\bm\gamma/\hbar} = 0.
\end{align}
Equation (\ref{MC}) is of fundamental importance in the remaining part of this paper.

By Proposition 2.10 of \cite{B}, a $BV_\infty$ morphism $f:(A,\Delta) \to (B,\Delta')$ induces a morphism $\mathcal{F}$ of $L_\infty[1]$ algebras, whose $n^{th}$ component $\mathcal{F}_n: A[[\hbar]]^{\otimes n} \to B[[\hbar]]$ is given by
\begin{align} \label{L morphism}
	\mathcal{F}_n(\alpha_1,\cdots,\alpha_n) := \frac{1}{\hbar^{n-1}}\kappa_n(f)(\alpha_1,\cdots,\alpha_k).
\end{align}
Note that by the cumulant condition, the right hand side of (\ref{L morphism}) indeed lies in $B[[\hbar]]$.
The degree of $\mathcal{F}_n$ is $(n-1)(m-1)$.
Let $\bm\gamma^A$ be a MC element in $A[[\hbar]]$, define
\begin{align*}
	\bm\gamma^B:= \sum_{n\geq 1} \frac{1}{n!} \mathcal{F}_n(\bm\gamma^A,\cdots,\bm\gamma^A),
\end{align*}
then $\bm\gamma^B$ is a MC element in $B[[\hbar]]$ by well-known statement on $L_\infty$ morphism.
In our $BV_\infty$ setting, where $\mathcal{F}$ is induced by the $BV_\infty$ morphism $f$, we have
\begin{align} \label{L morphism equiv}
	\bm\gamma^B = \hbar \log f (e^{\bm\gamma^A/\hbar}).
\end{align}
This identity follows from Remark 1.17 of \cite{B}. Equation (\ref{L morphism equiv}) is equivalent to
\begin{align*}
	e^{\bm\gamma^B/\hbar} = f (e^{\bm\gamma^A/\hbar}).
\end{align*}
By definition of $BV_\infty$ morphism and Equation (\ref{MC}),
\begin{align*}
	\Delta'e^{\bm\gamma^B/\hbar} = \Delta' f(e^{\bm\gamma^A/\hbar}) = f \Delta (e^{\bm\gamma^A/\hbar})=0,
\end{align*} 
as is expected from Equation (\ref{MC}).

Given a MC element $\bm\gamma$ in $A[[\hbar]]$, we can define an operator $\Delta_{\bm\gamma}: A[[\hbar]] \to A((\hbar))$ as
\begin{align} \label{twisted BV operator}
	\Delta_{\bm\gamma}(\alpha) := e^{-\bm\gamma/\hbar} \circ \Delta \circ e^{\bm\gamma/\hbar} (\alpha).
\end{align}
Here the action of $e^{\bm\gamma/\hbar}$ and $e^{-\bm\gamma/\hbar}$ are given by multiplication. Clearly we have
\begin{align*}
	\Delta_{\bm\gamma}(1_A) = 0, \qquad \Delta_{\bm\gamma}^2 = 0.
\end{align*}
On the other hand, by the adjoint form of Baker–Campbell–Hausdorff identity we can write
\begin{align*}
	\Delta_{\bm\gamma} = \sum_{i \geq 0} \frac{1}{i! \hbar^i}\text{ad}_{\bm\gamma}^i (\Delta).
\end{align*}
Therefore for $\alpha_1,\cdots,\alpha_n \in A[[\hbar]]$,
\begin{align*}
	\mathcal{K}_n(\Delta_{\bg})(\alpha_1,\cdots,\alpha_n) = \sum_{i \geq 0} \frac{1}{i! \hbar^i} \text{ad}_{\alpha_n} \cdots \text{ad}_{\alpha_1} \text{ad}_{\bm\gamma}^i (\Delta) (1_A)
\end{align*}
It follows then that $\Delta_{\bm\gamma}$ also satisfies the Koszul condtion.
In particular, when $n=1$,  we see $\Delta_{\bg}$ maps $ A[[\hbar]]$ to itself.
So $\Delta_{\bm\gamma}$ defines a new degree $m$ commutative $BV_\infty$ algebra structure on $A$.
We will call $\Delta_{\bm\gamma}$ the twisted $BV_\infty$ operator by the MC element $\bm\gamma$.

Let $\bg^A \in A[[\hbar]]$ be a MC element and $\bg^B = \hbar \log f (e^{\bg^A/\hbar})$.
Let $\Delta_{\bg^A}, \Delta_{\bg^B}$ be the corresponding twisted $BV_\infty$ operators.
Define a map
\begin{align*}
	f_{\bg^A} := e^{-\bg^B/\hbar} \circ f \circ  e^{\bg^A/\hbar} : A[[\hbar]] \to B((\hbar)),
\end{align*}
then obviously,
\begin{align} \label{commutativity of twist f with delta}
	f_{\bg^A}(1_A) = 1_B,\quad f_{\bg^A}\circ \Delta_{\bg^A} = \Delta_{\bg^B} \circ f_{\bg^A}.
\end{align}
For $\alpha_1,\cdots,\alpha_n \in A[[\hbar]]$, write $\alpha := \sum_{i=1}^n J_i \alpha_i$ and denote $\frac{\partial^n}{\partial J_1 \cdots \partial J_n} \big|_{J_1 = \cdots =J_n = 0}$ by $\frac{\td}{\td J}\big|_{J=0}$, then
\begin{align*}
	&\kappa_n(f_{\bg^A})(\alpha_1,\cdots,\alpha_n) \\
	=&~ \frac{\td}{\td J}\big|_{J=0} \log \left[e^{-\bg^B/\hbar} f (e^{\bg^A/\hbar} e^\alpha) \right] \\
	=&~ \frac{\td}{\td J}\big|_{J=0} \log \left[e^{-\bg^B/\hbar} f (e^{\bg^A/\hbar + \alpha)} \right] ~(\text{since}~ \bg^A/\hbar~ \text{has degree}~ 0)\\
	=&~ \frac{\td}{\td J}\big|_{J=0} \log \left[e^{-\bg^B/\hbar + \sum_{i=1}^\infty \frac{1}{i!} \kappa_k(f)(\bg^A/\hbar + \alpha, \cdots, \bg^A/\hbar + \alpha)} \right] ~ (\text{by Remark}~ 1.17 ~\text{of}~ \text{\cite{B}})\\
	=&~ \frac{\td}{\td J}\big|_{J=0} \left[-\sum_{k=1}^\infty \frac{1}{i!} \kappa_i(f)(\bg^B/\hbar,\cdots,\bg^B/\hbar) + \sum_{i=1}^\infty \frac{1}{i!} \kappa_i(f)(\bg^A/\hbar + \alpha, \cdots, \bg^A/\hbar + \alpha)\right] \\
	=&~ \sum_{i=0}^{\infty} \frac{1}{i! \hbar^i} \kappa_{i+n}(f)({\bg^A,\cdots,\bg^A,\alpha_1,\cdots,\alpha_n}).
\end{align*}
It follows that when $n=1$, $f_{\bg}$ maps $A[[\hbar]]$ to itself.
Moreover, $f_{\bg^A}$ also satisfies the cumulant condition and 
\begin{align*}
	\mathcal{F}_{\bg^A}(\alpha_1,\cdots,\alpha_n) :
	=& \frac{1}{\hbar^{n-1}} \kappa_n(f_{\bg^A})(\alpha_1,\cdots,\alpha_n) \\
	=& \sum_{i=1}^\infty \frac{1}{i! \hbar^{i+n-1}} \kappa_{i+n}(f)({\bg^A,\cdots,\bg^A,\alpha_1,\cdots,\alpha_n}) \\
	=& \sum_{i=1}^\infty \frac{1}{i!} \mathcal{F}_{i+n}({\bg^A,\cdots,\bg^A,\alpha_1,\cdots,\alpha_n})
\end{align*}
defines an $L_\infty[1]$ morphism from $A[[\hbar]][1-m]$ to $B[[\hbar]][1-m]$. It coincides with the twisted $L_\infty[1]$ morphism of $\mathcal{F}$ by $\bg^A$.
We call $f_{\bg^A}$ the twisted $BV_\infty$ morphism of $f$ by $\bg^A$.

%

\section{$\frac{\infty}{2}$-variation of Hodge structure} \label{sec: vhs}

In this section we recall the definition of a $\frac{\infty}{2}$-variation of Hodge structure and discuss the conditions under which such a structure arises from a commutative $  BV_\infty  $-algebra.
As is well-known (see \cite{Ba}), such a structure can be used to construction a Frobenius manifold.
The following definition is taken from \cite{L}.

\begin{definition} \label{VHS}
	A \emph{$\frac{\infty}{2}$-variation of Hodge structure} ($\frac{\infty}{2}$-VHS) consists of a parameter space $\mathcal{M}$ with structure sheaf $\mathcal{O}_{\mathcal{M}}$, an $\mathcal{O}_{\mathcal{M}}[[\hbar]]$-module $\mathcal{E}$ of finite rank, a flat connection
	\begin{align*}
		\nabla: \mathcal{E} \to \Omega_{\mathcal{M}} \otimes \hbar^{-1} \mathcal{E},
	\end{align*}
	and a pairing
	\begin{align*}
		(-,-)_{\mathcal{E}}: \mathcal{E} \times \mathcal{E} \to \mathcal{O}_{\mathcal{M}}[[\hbar]]
	\end{align*}
	such that the following conditions are satisfied:
	\begin{itemize}
		\item $(\alpha_1,\alpha_2)_{\mathcal{E}}(\hbar) = (-1)^{|\alpha_1||\alpha_2|}(\alpha_1,\alpha_2)_{\mathcal{E}}(-\hbar)$;
		\item $(g(\hbar) \alpha_1,\alpha_2)_{\mathcal{E}}(\hbar) = (\alpha_1,g(-\hbar)\alpha_2)_{\mathcal{E}}(\hbar) = g(\hbar)(\alpha_1,\alpha_2)_{\mathcal{E}}(\hbar), \forall g(\hbar) \in \K((\hbar))$, here $(-,-)_{\mathcal{E}}$ denotes also its $\K((\hbar))$-linear extension by abuse of notation;
		\item $(-,-)_{\mathcal{E}}$ is $\nabla$-flat;
		\item The induced paring on
		\begin{align*}
			\mathcal{E}/\hbar\mathcal{E} \times \mathcal{E}/\hbar\mathcal{E} \to \mathcal{O}_{\mathcal{M}}
		\end{align*}
		is non-degenerate.
	\end{itemize}
	The $\frac{\infty}{2}$-VHS is called miniversal if there is a section $s$ of $\mathcal{E}$ such that
	\begin{align*}
		\hbar \nabla s: T\mathcal{M} \to \mathcal{E}/\hbar \mathcal{E}, \qquad X \mapsto \nabla_X s
	\end{align*}
	is an isomorphism.
\end{definition}

We now turn to the question of under what conditions a miniversal $\frac{\infty}{2}$-variation of Hodge structure can be constructed from a commutative $  BV_\infty  $-algebra.

The first key ingredient is a parameter space $\mathcal{M}$, which we expect to be smooth.
In our setting, the smoothness of $\mathcal{M}$ is equivalent to the solvability of the Maurer–Cartan equation (\ref{MC equation}), and can be deduced from the degeneration of a certain spectral sequence, as explained below.
Let $C^{p,q} := A^{mp+q}$ and $\partial_k := \Delta_k : C^{p,q} \to C^{p+k,q+1-k}$, then $C^\bullet$ with $C^n := \prod_{p+q=n} C^{p,q}$ is equipped with a differential $\partial:= \sum_{k \geq 0} \partial_k$. The assumption on degree of $\Delta_k$ ensures $\partial$ has degree one. The column filtration $F^\bullet$ defined by $F^p C^\bullet := \sum C^{k\geq p, \bullet}$ defines a decreasing filtration of $(C^\bullet,\partial)$ and hence a spectral sequence $E^\bullet(A)$.

\begin{assumption} \label{degeneration}
	We assume the commutative $BV_\infty$ algebra $(A,\Delta)$ satisfies the Hodge-to-de Rham degeneration condition, i.e., $E^\bullet(A)$ degenerates at the first page.
\end{assumption}

The Hodge-to-de Rham degeneration condition has several equivalent characterizations, see \cite{DSV} for details.
As a direct consequence, there a map
\begin{align*}
	S: H(A,\Delta_0) \to H(A[[\hbar]],\Delta)
\end{align*}
such that
\begin{align*}
	T\circ S = \text{id}_{H(A,\Delta_0)}.
\end{align*}
Here $T:A[[\hbar]] \to A$ is the map given by sending $\hbar$ to zero, which defines a map
\begin{align*}
	T: H(A[[\hbar]],\Delta) \to H(A,\Delta_0).
\end{align*}
It follows that $H(A[[\hbar]],\Delta)$ is a free $\K[[\textbf{u}]]$-module of rank $\mu := \text{dim}_\K H(A,\Delta_0)$.
Due to Equation (\ref{MC}), the same proof as that for Theorem 2 of \cite{T} shows that there is a universal normalized solution to the MC equation (\ref{MC equation}) of the form 
\begin{align} \label{MC element}
	\bg(\textbf{u},\hbar) = \sum_i \gamma_i u^i + \sum_{i,j} \gamma_{ij}(\hbar) u^i u^j + \sum_{i,j,k} \gamma_{ijk}(\hbar) u^i u^j u^k + \cdots,
\end{align}
where $\{\gamma_i\}_{1 \leq i \leq \mu}$ is any $\K[[\hbar]]$-basis of $H(A[[\hbar]],\Delta)$ and $\textbf{u} = (u^1,\cdots,u^\mu)$ are coordinates on $H(A,\Delta_0)$ w.r.t the basis $T\gamma_1,\cdots,T\gamma_\mu$. This in particular implies the deformation of the $L_\infty[1]$ algebra structure on $A$ is unobstructed.
Denote by $\mathcal{M}$ the formal neighbourhood of zero in $H(A,\Delta_0)$, we have $\mathcal{M} = \Spec~ \K[[\textbf{u}]]$.

The second ingredient is a flat bundle $(\mathcal{E} \to \mathcal{M}, \nabla)$. Let $\bg(\textbf{u},\hbar)$ be a universal normalized MC element as above, the cohomology group $\mathcal{V} :=H(A((\hbar)), \Delta_{\bm\gamma})$ is a $\K[[\textbf{u}]]$-module.
Geometrically, it can be viewed as a vector bundle (of infinite dimension) over $\mathcal{M}$ with the fiber over $\textbf{u}$ given by
$\mathcal{V}_{\textbf{u}} :=H(A((\hbar)), \Delta_{\bm\gamma(\textbf{u},\hbar)})$.
For $[\alpha] \in H(A((\hbar)),\Delta)$, by (\ref{twisted BV operator}), $[e^{-\bm\gamma(\textbf{u},\hbar)/\hbar} \alpha]$ is a well-defined cohomology class in $H(A((\hbar)), \Delta_{\bm\gamma(\textbf{u},\hbar)})$ and hence define a section of $\mathcal{V}$.
We can treat them as flat sections of a flat connection $\nabla$ on $\mathcal{V}$ with
\begin{align*}
	\nabla = \nabla^{\bg} := \sum_{i=1}^\mu (\frac{\partial}{\partial u^i} + \frac{1}{\hbar} \frac{\partial \bg}{\partial u^i}) \td u^i.
\end{align*}
Consider the subbundle $\mathcal{E}$ of $\mathcal{V}$ with $\mathcal{E}_{\textbf{u}} = H(A[[\hbar]], \Delta_{\bm\gamma(\textbf{u},\hbar)}) \subset \mathcal{V}_{\textbf{u}}$, the restriction of $\nabla$ to $\mathcal{E}$ satisfies
\begin{align*}
	\nabla: \mathcal{E} \to \Omega_{\mathcal{M}} \otimes \hbar^{-1} \mathcal{E}.
\end{align*}

\begin{remark}
	An equivalent way, which is fact the original formulation in \cite{Ba}, of formulating the bundle $(\mathcal{E} \to \mathcal{M}, \nabla)$ is to take
	\begin{align*}
		\mathcal{E}_{\textbf{u}} := e^{\bm\gamma(\textbf{u},\hbar)/\hbar} H(A[[\hbar]], \Delta_{\bm\gamma(\textbf{u},\hbar)}) \subset H(A((\hbar)),\Delta)
	\end{align*}
	and take $\nabla$ to be the restriction to $\mathcal{E}$ of the trivial flat connection of the trivial bundle
	\begin{align*}
		\mathcal{M} \times H(A((\hbar)),\Delta) \to \mathcal{M}.
	\end{align*}
\end{remark}
%
%

The last ingredient we need is a pairing. Since a pairing can not be obtained directly from the definition of $BV_\infty$ algebra, we need a further assumption on its existence.

\begin{assumption} \label{pairing}
	Given any universal normalized MC element $\bg(\textbf{u},\hbar)$, let $(\mathcal{E} \to \mathcal{M},\nabla)$ be the flat bundle defined as above. We assume there is a pairing $(-,-)_{\mathcal{E}}$ of the kind in Definition \ref{VHS} so that $(\mathcal{E}, \mathcal{M}, \nabla,(-,-)_{\mathcal{E}})$ define a $\frac{\infty}{2}$-VHS.
\end{assumption}

\begin{remark}
	We expect that once the existence of a pairing holds for a fixed universal normalized MC element, then it  also holds for any other choice.
	A complete justification of this statement would necessitate a detailed treatment of the gauge group action on MC elements of commutative $BV_\infty$ algebras. We leave this development for future work.
\end{remark}

\begin{example} \label{ex: cyclic BV}
	Let $(A,\Delta,\Tr)$ be a cyclic commutative $BV_\infty$ as in \cite{W}, and assume it satisfies the Hodge-to-de Rham degeneration condition, then Assumption \ref{pairing} holds automatically.
	More concretely, the trace map $\Tr: A[[\hbar]] \to \K[[\hbar]]$ defines a pairing on $A[[\hbar]]$ as
	\begin{align*}
		(\alpha,\beta) = \Tr \left(\alpha \overline{\beta}\right),
	\end{align*}
	where $\overline{\beta(\hbar)} := \beta(-\hbar)$.
	The cyclic property implies this pairing descends to a pairing on $H(A((\hbar)),\Delta)$:
	\begin{align*}
		([\alpha],[\beta]) = \Tr \left(\alpha \overline{\beta} \right).
	\end{align*}
	Let $\bg$ be any universal normalized MC element and let $\nabla$ be the corresponding flat connection; we define the pairing on $\mathcal{E}_{\textbf{u}}$ by parallel transporting to $\mathcal{E}_0 \otimes \K((\hbar)) = H(A((\hbar)),\Delta)$: for $\alpha, \beta \in \mathcal{E}_{\textbf{u}}$, define
	\begin{align*}
		(\alpha,\beta)_{\mathcal{E}_{\textbf{u}}} 
		:=  \Tr \left(e^{\bm\gamma(\textbf{u},\hbar)/\hbar} \alpha \cdot \overline{e^{\bm\gamma(\textbf{u},\hbar)/\hbar} \beta}\right)
		= \Tr \left(e^{(\bm\gamma(\textbf{u},\hbar)-\bm\gamma(\textbf{u},-\hbar))/\hbar} \alpha (\hbar) \beta(-\hbar)\right).
	\end{align*}
	Since both $(\bm\gamma(\textbf{u},\hbar)-\bm\gamma(\textbf{u},-\hbar))/\hbar$ and $\alpha (\hbar) \beta(-\hbar)$ lie in $A[[\hbar]]$, $(\alpha,\beta)_{\mathcal{E}_{\textbf{u}}} \in \K[[\hbar]]$.
	The other conditions for $(-,-)_\mathcal{E}$ is easy to check.
\end{example}

Examples of commutative $BV_\infty$ algebras as in Example \ref{ex: cyclic BV} include: the dGBV algebra $(\PV(X),\bar\partial, \partial, \Tr)$ of smooth Dolbeault polyvector fields on a compact Calabi-Yau manifold $X$ with the trace map given by integration on $X$ (see \cite{BK}); the dGBV algebra $(\PV_c(X),\bar\partial_W, \partial, \Tr)$ of compactly supported smooth Dolbeault polyvector fields for a Landau-Ginzburg model $(X,W)$ with the trace map given by integration (see \cite{LLS,LW}).
More examples can be found in \cite{W}.

There also exist examples of commutative $BV_\infty$ algebras for which a pairing exists, yet it is not induced by a trace map of the kind in Example \ref{ex: cyclic BV}.

\begin{example}
	Let $W$ be a holomorphic polynomial on $\C^n$ with only an isolated critical point. Consider the set $\Theta^\bullet(\C^n)$ of global holomorphic polyvector fields on $\C^n$ with two operators $\{W,-\}$ and $\partial$.
	The triple $(\Theta^\bullet(\C^n), \{W,-\}, \partial)$ is a dGBV algebra satisfying the Hodge-to-de Rham degeneration condition.
	In this case, the higher residue pairing of Kyoji Saito satisfies the condition in Definition \ref{VHS}, yet it is not given by a trace map
	\footnote{It admits a definition in terms of a trace map, but this trace is not taken on $\Theta^\bullet(\mathbb{C}^n)$.}.
	See \cite{LLS} for details.
\end{example}

Our goal is to construct a formal Frobenius manifold structure on $\mathcal{M}$, and the following assumption plays an essential role in achieving this.

\begin{assumption} \label{basis}
	There is a good basis for the pairing on $\mathcal{E}_0 = H(A[[\hbar]],\Delta)$, i.e., a $\K[[\hbar]]$-basis $\gamma_1,\cdots,\gamma_\mu$ of $H(A[[\hbar]],\Delta)$ such that
	\begin{align*}
		(\gamma_i,\gamma_j)_{\mathcal{E}_0} = \K, \quad \forall 1\leq i,j \leq \mu.
	\end{align*}
\end{assumption}

Define a symplectic pairing on $H(A((\hbar)),\Delta)$ as
\begin{align*}
	\omega(\beta_1,\beta_2) := \Res_{\hbar = 0} (\beta_1,\beta_2)_{\mathcal{E}} \td \hbar,\quad \forall \beta_1,\beta_2 \in H(A((\hbar)),\Delta),
\end{align*}
then $\mathcal{E}_0$ is a Lagrangian subspace of $H(A((\hbar)),\Delta)$.
Let $\gamma_1,\cdots,\gamma_\mu$ be a good basis, then
\begin{align*}
	\mathcal{L} := \hbar^{-1} \text{Span}_{\K[\hbar^{-1}]}\langle \gamma_1,\cdots,\gamma_\mu\rangle
\end{align*}
is another Lagrangian subspace satisfying $H(A((\hbar)),\Delta) = \mathcal{E}_0 \oplus \mathcal{L}$.
We call $\mathcal{L}$ a polarization of $(\mathcal{E}, \mathcal{M}, \nabla,(-,-)_{\mathcal{E}})$.

The universality of the MC element $\bg$ implies the $\frac{\infty}{2}$ VHS is miniversal.
It is a classical fact (see for example \cite{Ba,L}) that a miniversal $\frac{\infty}{2}$-VHS, together with a polarization, gives rise to a Frobenius manifold.
We therefore state the following proposition.

\begin{proposition}
	Let $(A,\Delta)$ be a commutative $BV_\infty$ algebra satisfying the Assumption \ref{degeneration}, \ref{pairing} and \ref{basis}, then there is a Frobenius manifold structure on the formal neighbourhood $\mathcal{M}$ of zero in $H(A,\Delta_0)$.
\end{proposition}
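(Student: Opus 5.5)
The plan is to reduce the statement to the classical construction of Barannikov (as recalled in \cite{Ba,L}): a miniversal $\frac{\infty}{2}$-VHS together with a polarization yields a formal Frobenius manifold. So I only need to check that the data built above from $(A,\Delta)$ meet the hypotheses of that construction, and then run it.

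The steps are as follows.

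First, by Assumption \ref{degeneration} the module $H(A[[\hbar]],\Delta)$ is free of rank $\mu$. Choose the good basis $\gamma_1,\cdots,\gamma_\mu$ of Assumption \ref{basis}, and let $\bg(\textbf{u},\hbar)$ be the universal normalized MC element (\ref{MC element}) built from this particular basis. Assumption \ref{pairing} then equips $(\mathcal{E},\mathcal{M},\nabla)$ with a pairing making it an $\frac{\infty}{2}$-VHS.

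Second, I check miniversality using the section $s=[1_A]$, i.e. the class of $e^{-\bg/\hbar}\cdot e^{\bg/\hbar}$, which is flat in the twisted picture. One has
\begin{align*}
\hbar\nabla_{\partial_{u^i}} s = \left[\frac{\partial \bg}{\partial u^i}\right] \equiv \gamma_i + O(\textbf{u}) \pmod{\hbar}.
\end{align*}
This follows from the form of (\ref{MC element}). Hence $\hbar\nabla s: T\mathcal{M}\to \mathcal{E}/\hbar\mathcal{E}$ sends $\partial_{u^i}$ to a basis at $\textbf{u}=0$. Since $\mathcal{E}/\hbar\mathcal{E}$ has rank $\mu$ (again by degeneration, via the twisted complex, whose $E_1$-degeneration persists under deformation by Nakayama), this map is an isomorphism over the formal neighbourhood.

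Third, I verify that $\mathcal{L}=\hbar^{-1}\mathrm{Span}_{\K[\hbar^{-1}]}\langle\gamma_i\rangle$ is an opposite Lagrangian to $\mathcal{E}_0$ for $\omega$. The complement property is immediate from freeness. For isotropy, $(\gamma_i,\gamma_j)\in\K$ together with sesquilinearity gives
\begin{align*}
(\hbar^{-a}\gamma_i,\hbar^{-b}\gamma_j) = (-1)^b\hbar^{-a-b}c_{ij}, \quad a,b\geq 1,
\end{align*}
which has no $\hbar^{-1}$ term, so $\omega$ vanishes on $\mathcal{L}$. Moreover $\mathcal{L}$ is preserved by $\hbar^{-1}$, which is the compatibility needed for the Frobenius structure.

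Fourth, I run the standard construction. Identify each fibre $e^{\bg/\hbar}\mathcal{E}_{\textbf{u}}$, as in the Remark above, with the graph of a map $\mathcal{E}_0\to\mathcal{L}$. The intersection $e^{\bg/\hbar}\mathcal{E}_{\textbf{u}}\cap(\hbar\mathcal{L}\oplus\mathcal{E}_0\text{-constant part})$ gives a unique section $\tilde s(\textbf{u})\in \mathcal{E}_{\textbf{u}}$ of the form $1+O(\hbar^{-1})$ in $\mathcal{L}$-components. The flat coordinates are then given by the coefficients of $\hbar^{-1}\gamma_i$, and the potential $F$ by the $\hbar^{-2}$-coefficients. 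The metric is $(\gamma_i,\gamma_j)|_{\hbar=0}$, which is constant and nondegenerate by the fourth axiom of Definition \ref{VHS}. The product is defined by $\hbar\nabla_X\hbar\nabla_Y \tilde s \bmod \hbar$. Associativity and the WDVV equations follow from flatness of $\nabla$, and invariance of the metric follows from $\nabla$-flatness of the pairing.

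I expect the main obstacle to be the fourth step, namely producing the normalized primitive section and flat coordinates from the polarization. This requires a Birkhoff-type factorization of the period map in the formal setting, with $\hbar^{-1}$-linearity of $\mathcal{L}$ used crucially. Rather than redo it, I would cite \cite{Ba,L} for it once the first three steps are established. A second subtle point is the unit and possible Euler field. I would only claim a formal Frobenius manifold without Euler field, with the unit coming from $[1_A]$, which requires the normalization of $\bg$.
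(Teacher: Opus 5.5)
Your proposal follows the same route as the paper: miniversality comes from the universality of $\bg$ (via the section $[1_A]$, with $\hbar\nabla_{\partial_{u^i}}[1_A]=[\partial\bg/\partial u^i]$), the polarization $\mathcal{L}$ comes from the good basis, and then Barannikov's construction is cited from \cite{Ba,L}; you simply spell out more of these checks than the paper does. One small slip: $[1_A]$ is not $\nabla$-flat (your own formula gives $\nabla[1_A]\neq 0$, and a flat section would have $\hbar\nabla s=0$, so it could not be miniversal), but nothing in your argument actually uses flatness.
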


\section{$BV_\infty$ quasi-isomorphism} \label{sec: BV quasim}

Assume that $f \colon (A,\Delta) \to (B,\Delta')$ is a quasi-isomorphism of commutative $BV_\infty$-algebras of degree $m$, then $f_0: H(A,\Delta_0) \to H(B,\Delta'_0)$ identifies $\mathcal{M}_A$ with $\mathcal{M}_B$. Assume both $A$ and $B$ satisfy Assumption \ref{degeneration}.
Given a universal normalized MC element $\bg^A(\mathbf{u},\hbar)$ in $A[[\hbar]]$, then
\begin{align*}
	\bg^B(\mathbf{u},\hbar) := \hbar \log f \bigl( e^{\bg^A(\mathbf{u},\hbar)/\hbar } \bigr)
\end{align*}
is a universal normalized MC element in $B[[\hbar]]$.
By (\ref{commutativity of twist f with delta}), $f_{\bg^A}$ identifies $\mathcal{V}^A$ with $\mathcal{V}^B$, and $\mathcal{E}^A_{\textbf{u}}$ with $\mathcal{E}^B_{\textbf{u}}$.
Let $(\mathcal{E}^A \to \mathcal{M}^A, \nabla^A)$ and $(\mathcal{E}^B \to \mathcal{M}^B, \nabla^B)$ be the flat bundles constructed from $\bg^A$ and $\bg^B$ respectively. 
By the very definition of $f_{\bg^A}$, it acts as
\begin{align*}
	f_{\bg^A}: [e^{-\bm\gamma^A(\textbf{u},\hbar)/\hbar} \alpha] \mapsto [e^{-\bm\gamma^B(\textbf{u},\hbar)/\hbar} \alpha].
\end{align*}
Consequently, $f_{\bg^A}$ identifies the spaces of flat sections of $\mathcal{V}^A$ with those of $\mathcal{V}^B$, and hence identifies the flat connections $\nabla^A$ and $\nabla^B$.
In this way, the bundles $(\mathcal{E}^A \to \mathcal{M}^A, \nabla^A)$ and $(\mathcal{E}^B \to \mathcal{M}^B, \nabla^B)$ are identified.
Assume the Assumption \ref{pairing} holds for $A,B$ so that both
\begin{align*}
	(\mathcal{E}^A, \mathcal{M}^A, \nabla^A,(-,-)_{\mathcal{E}^A}),\quad (\mathcal{E}^B, \mathcal{M}^B, \nabla^B,(-,-)_{\mathcal{E}^B})
\end{align*}
form miniversal $\frac{\infty}{2}$-VHSs.

\begin{assumption} \label{compatiblity}
	The morphism $f$ is compatible with the pairing on $\mathcal{E}^A_0$ and $\mathcal{E}^B_0$, i.e.,
	\begin{align*}
		(f(\alpha), f(\beta))_{\mathcal{E}^B_0} = (\alpha, \beta)_{\mathcal{E}^A_0},\quad \forall \alpha,\beta \in \mathcal{E}^A_0.
	\end{align*}
\end{assumption}
Assumption \ref{compatiblity} is equivalent to the seemingly stronger compatibility assumption for $f_{\bg^A}$.
In fact, by flatness of the pairing,
\begin{align*}
	(f_{\bg^A}(\alpha), f_{\bg^A}(\beta))_{\mathcal{E}^B_\textbf{u}} =& (f (e^{\bg(\textbf{u},\hbar)/\hbar} \alpha), f(e^{\bg(\textbf{u},\hbar)/\hbar} \beta))_{\mathcal{E}^B_0} \\
	=& (e^{\bg(\textbf{u},\hbar)/\hbar} \alpha, e^{\bg(\textbf{u},\hbar)/\hbar} \beta)_{\mathcal{E}^A_0} \\
	=& (\alpha, \beta)_{\mathcal{E}^A_\textbf{u}}\qquad \forall \alpha,\beta \in \mathcal{E}^A_{\textbf{u}}.
\end{align*}

\begin{theorem} \label{main theorem}
	Let $(A,\Delta)$ and $(B,\Delta')$ be degree $m$ commutative $BV_\infty$ algebras satisfying the Assumption \ref{degeneration}, \ref{pairing} and \ref{basis}; let $f: (A,\Delta) \to (B,\Delta')$ be a $BV_\infty$ quasi-isomorphism satisfying Assumption \ref{compatiblity}. 
	Then the $\frac{\infty}{2}$-VHSs with polarizations constructed from $A$ and $B$ are isomorphic. In particular, the Frobenius manifold structures on $\mathcal{M}^A$ and $\mathcal{M}^B$ are isomorphic.
\end{theorem}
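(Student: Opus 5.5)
The plan is to assemble the isomorphism from the identifications already set up in this section, and then check that the polarizations match. First I will show that $f_0: H(A,\Delta_0)\to H(B,\Delta'_0)$ is an isomorphism, so it identifies $\mathcal{M}^A$ with $\mathcal{M}^B$. Next I will show that $f: H(A[[\hbar]],\Delta)\to H(B[[\hbar]],\Delta')$ is an isomorphism of free $\K[[\hbar]]$-modules of rank $\mu$. By Assumption \ref{degeneration}, both sides are free. Moreover $T\circ f = f_0\circ T$ on cohomology, so $f$ reduces modulo $\hbar$ to the isomorphism $f_0$, and Nakayama's lemma then gives that $f$ itself is an isomorphism. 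The same argument applies to $H(A((\hbar)),\Delta)\to H(B((\hbar)),\Delta')$ after inverting $\hbar$. Applying this fibrewise to the twisted morphism $f_{\bg^A}$, whose constant term modulo $\hbar$ and modulo $\mathbf{u}$ is $f_0$, shows that $f_{\bg^A}:\mathcal{E}^A\to\mathcal{E}^B$ is an isomorphism of $\K[[\mathbf{u}]][[\hbar]]$-modules. Here I use (\ref{commutativity of twist f with delta}) and a $\mathbf{u}$-adic Nakayama argument.

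Second, I will check that this bundle isomorphism is compatible with the connections. On flat sections, $f_{\bg^A}$ sends $[e^{-\bg^A/\hbar}\alpha]$ to $[e^{-\bg^B/\hbar}f(\alpha)]$, by the formula displayed before Assumption \ref{compatiblity}, read with $f(\alpha)$ in place of $\alpha$. Since flat sections span $\mathcal{V}$ over $\K[[\mathbf{u}]]((\hbar))$, this gives $f_{\bg^A}\circ\nabla^A=\nabla^B\circ f_{\bg^A}$. The underlying identification of bases requires that $f$ send $\gamma^A_i$ to the class $\gamma^B_i$ appearing in $\bg^B$. This holds because, to first order in $\mathbf{u}$, $\bg^B=\sum_i f(\gamma^A_i)u^i+O(\mathbf{u}^2)$, since $\mathcal{F}_1=f$. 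The pairings match by Assumption \ref{compatiblity} together with the flatness computation displayed just before the theorem. Hence $(\mathcal{E}^A,\mathcal{M}^A,\nabla^A,(-,-)_{\mathcal{E}^A})\cong(\mathcal{E}^B,\mathcal{M}^B,\nabla^B,(-,-)_{\mathcal{E}^B})$ as $\frac{\infty}{2}$-VHSs. Miniversality is preserved because the section $s=[1]$ corresponds on both sides: $f_{\bg^A}(1_A)=1_B$.

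Third, I will deal with the polarizations. Take a good basis $\gamma^A_1,\dots,\gamma^A_\mu$ of $\mathcal{E}^A_0$ and set $\gamma^B_i:=f(\gamma^A_i)$. By the first step these form a $\K[[\hbar]]$-basis of $\mathcal{E}^B_0$, and by Assumption \ref{compatiblity} $(\gamma^B_i,\gamma^B_j)_{\mathcal{E}^B_0}=(\gamma^A_i,\gamma^A_j)_{\mathcal{E}^A_0}\in\K$, so they form a good basis. By $\K((\hbar))$-linearity, $f(\mathcal{L}^A)=\hbar^{-1}\text{Span}_{\K[\hbar^{-1}]}\langle\gamma^B_i\rangle=\mathcal{L}^B$. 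Because $f$ preserves the pairing, it also preserves the symplectic form $\omega$, so the polarized structures correspond. Using this $\gamma^B$ to build the universal MC element is consistent with $\bg^B=\hbar\log f(e^{\bg^A/\hbar})$, by the first-order computation above. Normalization of $\bg^B$ with respect to $\mathcal{L}^B$ follows from normalization of $\bg^A$, since $e^{\bg^B/\hbar}=f(e^{\bg^A/\hbar})$ and $f$ maps $\mathcal{L}^A$ onto $\mathcal{L}^B$.

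Finally, the Frobenius manifold is functorially determined by a miniversal $\frac{\infty}{2}$-VHS with polarization, via the classical construction of \cite{Ba,L}: the flat coordinates, the product and the metric are read off from $\bg$ and the pairing. An isomorphism of polarized VHSs covering $f_0:\mathcal{M}^A\to\mathcal{M}^B$ therefore gives an isomorphism of Frobenius manifolds. I expect the main obstacle to be the second and third steps: making precise that $f_{\bg^A}$ really sends $\hbar$-adic lattices isomorphically, that is $\mathcal{E}^A_{\mathbf{u}}$ onto $\mathcal{E}^B_{\mathbf{u}}$ rather than merely into, and that $\bg^B$ is normalized with respect to the transported polarization. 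My approach there is to argue by $\hbar$- and $\mathbf{u}$-adic Nakayama, combined with the explicit identity $e^{\bg^B/\hbar}=f(e^{\bg^A/\hbar})$.
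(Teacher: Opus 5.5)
Your proposal is correct and follows the same route as the paper. You identify $\mathcal{E}^A$ with $\mathcal{E}^B$ via the twisted morphism $f_{\bg^A}$, match the connections through the flat sections $[e^{-\bg^A/\hbar}\alpha]\mapsto[e^{-\bg^B/\hbar}f(\alpha)]$, match the pairings by Assumption \ref{compatiblity} plus flatness, transport the polarization, and conclude because a miniversal polarized $\frac{\infty}{2}$-VHS determines the Frobenius manifold. Along the way you supply details the paper leaves implicit: the $\hbar$- and $\mathbf{u}$-adic Nakayama argument that $f_{\bg^A}$ maps the lattices onto each other, the choice $\gamma^B_i=f(\gamma^A_i)$ as the good basis on $B$, and the normalization of $\bg^B$. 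You also correctly fix the paper's flat-section formula, which should read $f(\alpha)$ in place of $\alpha$.
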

\begin{proof}
	By discussion above, when $f$ is a $BV_\infty$ quasi-isomorphism, it induces an isomorphism of flat vector bundles
	\begin{align*}
		(\mathcal{E}^A \to \mathcal{M}^A, \nabla^A)\quad \text{and} \quad (\mathcal{E}^B \to \mathcal{M}^B, \nabla^B).
	\end{align*}
	When Assumption \ref{compatiblity} is also satisfied, $f$ further induces an isomorphism of $\frac{\infty}{2}$-VHSs
	\begin{align*}
		(\mathcal{E}^A, \mathcal{M}^A, \nabla^A,(-,-)_{\mathcal{E}^A})\quad \text{and} \quad (\mathcal{E}^B, \mathcal{M}^B, \nabla^B,(-,-)_{\mathcal{E}^B})
	\end{align*}
	together with an isomorphism of polarizations $\mathcal{L}^A$ and $\mathcal{L}^B$.
	Since the Frobenius manifold structure is uniquely determined by these data, the conclusion follows.
\end{proof}

\begin{remark}
	It is instructive to compare Theorem 3.2 of \cite{CZ} with Theorem \ref{main theorem}.
	The main generalizations are as follows:
	
	dGBV algebras are extended to commutative $BV_\infty$-algebras;
	the $ddbar$-condition (Lemma 1.1 in \cite{CZ}) is replaced by the weaker Hodge-to-de Rham degeneration assumption;
	strict morphisms of dGBV algebras are replaced by homotopy-theoretic ($BV_\infty$-)morphisms;
	the “nice integral” is generalized to a pairing on $\frac{\infty}{2}$-variation of Hodge structure.
	
	These generalizations come at the expense of two additional assumptions: the existence of a good basis and the compatibility of the homotopy morphism with the pairing. In \cite{CZ}, the analogues of these assumptions follow directly from the $ddbar$-condition together with the requirement that the strict morphism preserves the integral map.
\end{remark}

\begin{remark}
	In Theorem \ref{main theorem} it suffices to assume that only one of $A$ and $B$ satisfies Assumption \ref{degeneration}: since $f$ is a quasi-isomorphism, the other then satisfies Assumption \ref{degeneration} as well. The same argument applies to Assumptions \ref{pairing} and \ref{basis}.
	This observation is convenient in applications, particularly when these assumptions are difficult to verify for one of $A$ and $B$.
\end{remark}

\section{An example: $A_1$ singularity} \label{sec: example}

In this section we describe an illustrative example arising naturally from singularity theory.
Let $X := \text{Spec}\,\mathbb{C}[t]$ and $W := \frac{1}{2}t^{2}$ so that the pair $(X,W)$ defines the Landau-Ginzburg model corresponding to $A_{1}$-singularity.
Let $$A := \mathbb{C}[t] \oplus \mathbb{C}[t]\,\partial_{t}$$ be the space of polynomial polyvector fields on $X$.
We equip $A$ with a grading determined by $\deg t = 0$ and $\deg\partial_{t} = -1$.
Define $\Delta_{0} := \{W,-\}$, whose action on $A$ is given by
\begin{align*}
	\{W,g(t)\} = 0, \quad \text{and} \quad
	\{W,g(t) \partial_t\} = t g(t).
\end{align*}
Define $\Delta_{1} := \partial$ by
\begin{align*}
	\partial(g(t)) = 0, \quad \text{and} \quad
	\partial ( g(t) \partial_t) = g'(t).
\end{align*}
Let $\Delta_k = 0$ for all $k \geq 2$, then $(A,\Delta)$ forms a commutative $BV_\infty$ algebra of degree one.

It is easy to show $H(A,\Delta_0) \cong \C\cdot [1]$ is one dimensional and $H(A[[\hbar]],\Delta) \cong \C[[\hbar]] \cdot [1]$.
The deformation of $(A,\Delta)$ is unobstructed and $\bg^A(u,\hbar) = u\cdot 1$ is a universal normalized MC element.
In this example, the pairing (which is the  higher residue pairing of Kyoji Saito) is independent of the parameter $u$.
Instead of deriving the formula for the pairing, we only point out (see \cite{HLSR}) that $1 \in A[[\hbar]]$ defines a good basis and $(1,1)_{\mathcal{E}_0} = 1$.
A direct computation shows
\begin{align*}
	t^{2k+1} \in \text{Im}(\Delta)\quad \text{and} \quad t^{2k} \in (-1)^k (2k-1)!! \hbar^k + \text{Im}(\Delta),
\end{align*}
therefore
\begin{align*}
	(t^{2k},t^{2l})_{\mathcal{E}^A} = (-1)^{k+l} \hbar^k (-\hbar)^l (2k-1)!! (2l-1)!! = (-1)^k \hbar^{k+l} (2k-1)!! (2l-1)!!.
\end{align*}
All the pairings of the other type vanish.
Note that this pairing is not given by a trace map.

On the other hand, there is a one dimensional $BV_\infty$ algebra $(B,\Delta')$ with $B:=\C$ and $\Delta':=0$.
It holds that $H(B,\Delta'_0) \cong \C\cdot 1$ and $H(B[[\hbar]],\Delta') \cong \C[[\hbar]] \cdot 1$. A universal normalized MC element is again given by $\bg^B = u \cdot 1$.
The pairing on $B$ is given by a trace map $\Tr_B(a) = a, \forall a \in B$.
Explicitly, the pairing on $\mathcal{E}^B$ is
\begin{align*}
	(f(\hbar)a,g(\hbar)b)_{\mathcal{E}^B} = f(\hbar)g(-\hbar)ab,\quad \forall a,b \in B
\end{align*}
which is also independent of the parameter.

Our goal is to construct a $BV_\infty$ quasi-isomorphism
$f \colon A[[\hbar]] \to B[[\hbar]]$
that satisfies Assumption \ref{compatiblity}.
The first step is to find a chain map $f = f_0 + \hbar f_1 + \hbar^2 f_2 + \cdots$ satisfying
\begin{align*}
	(f_0 + \hbar f_1 + \hbar^2 f_2 + \cdots) \circ ({W,-} + \hbar \partial) = 0.
\end{align*}
Evaluating the leading term in $\hbar$ on an element of the form $g(t) \partial_t  $ yields $  f_0(t g(t)) = 0$. This motivates the definition
\begin{align*}
	f_0(g(t)) = g(0), \quad
	f_0(g(t) \partial_t) = 0.
\end{align*}
The coefficient of $\hbar^1$ on the left-hand side gives the relation $f_0 \partial + f_1 \{W,-\} = 0$. Applying this to $t \partial_t$ produces $f_1(t^2) = -f_0(1) = -1$. We therefore set
\begin{align*}
	f_1(t^k) = -\delta_{k,2}, \quad
	f_1(g(t) \partial_t) = 0.
\end{align*}
Proceeding inductively on the powers of $  \hbar  $ leads us to define, for each $  i \geq 0  $,
\begin{align*}
	f_i(t^k) = (-1)^i \delta_{k,2i} (2i-1)!!, \quad
	f_i(g(t) \partial_t) = 0.
\end{align*}

\begin{proposition}
	Let $f_i$ be given as above, then $f:=\sum_i \hbar^i f_i$ is a $BV_\infty$ quasi-isomorphism. Moreover, $f$ satisfies Assumption \ref{compatiblity}.
\end{proposition}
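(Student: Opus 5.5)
The plan is to verify the three conditions of Definition~\ref{morphism} in turn, then quasi-isomorphism, then Assumption~\ref{compatiblity}. The key observation, which I would establish first, is that $f$ acts on polynomials in $t$ like a Gaussian expectation with variance $-\hbar$. Expanding $f(e^{Jt})$ for a formal even parameter $J$ gives
\begin{align*}
f(e^{Jt}) = \sum_{i\geq 0} \frac{J^{2i}}{(2i)!}(-1)^i(2i-1)!!\,\hbar^i = \sum_{i \geq 0} \frac{1}{i!}\Big(-\frac{\hbar J^2}{2}\Big)^i = e^{-\hbar J^2/2},
\end{align*}
using $(2i)! = 2^i i!\,(2i-1)!!$. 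Moreover $f$ vanishes on every element of $A$ containing $\partial_t$.

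\textbf{Unitality and chain map.} Unitality is $f(1)=f_0(1)=1$. Since $\Delta'=0$, the chain-map condition reduces to $f\circ\Delta=0$. On $g(t)$ both $\Delta_0$ and $\Delta_1$ vanish. On $t^k\partial_t$ one needs
\begin{align*}
f(t^{k+1}) + k\hbar f(t^{k-1}) = 0 .
\end{align*}
This is trivial for $k$ even, since both powers are odd. For $k+1=2i$ it reads $(-1)^i(2i-1)!!\hbar^i + (2i-1)\hbar(-1)^{i-1}(2i-3)!!\hbar^{i-1}=0$, which holds identically. Equivalently, it is the Gaussian integration-by-parts identity.

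\textbf{Cumulant condition (main step).} Since $\kappa_n(f)$ is $\K[[\hbar]]$-multilinear, it suffices to take each $\alpha_i$ homogeneous in $A$, i.e.\ either $p(t)$ or $p(t)\partial_t$. If some $\alpha_i$ contains $\partial_t$, every product $\prod_{j\in I}\alpha_j$ with $i\in I$ contains $\partial_t$. Hence every term in the defining sum of $\kappa_n(f)$ contains a vanishing factor, and the cumulant is $0$.

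So assume every $\alpha_i=p_i(t)$. By multilinearity we may take $p_i = t^{k_i}$. Then
\begin{align*}
\log f\big(e^{\sum_i J_i t^{k_i}}\big)
\end{align*}
is computed by Wick's theorem for the Gaussian $e^{-\hbar J^2/2}$. Each pairing (propagator) carries exactly one factor $\hbar$, and the logarithm keeps only connected Wick graphs. A connected graph whose vertices are the $n$ insertions $t^{k_1},\dots,t^{k_n}$ has at least $n-1$ edges, so $\kappa_n(f)(t^{k_1},\dots,t^{k_n})\equiv 0 \pmod{\hbar^{n-1}}$.

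To keep the argument honest I would phrase this without Feynman-graph language. First note that the moment formula $f(t^{a})$ equals the number of perfect matchings of $a$ points times $(-\hbar)^{a/2}$. Then invoke the standard moment–cumulant inversion: the cumulant of products of blocks (the Leonov–Shiryaev formula) is the sum over matchings whose block graph is connected. This is where I expect the main care to be needed; the connectivity bound itself is elementary.

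\textbf{Quasi-isomorphism and compatibility.} $f_0$ is evaluation at $t=0$. It sends $[1]$, the generator of $H(A,\Delta_0)\cong\C$, to $1$, the generator of $H(B,\Delta_0')$, so $f_0$ is an isomorphism on cohomology. For Assumption~\ref{compatiblity}, $\mathcal{E}^A_0 = \C[[\hbar]]\cdot[1]$, and $f$ is $\K[[\hbar]]$-linear with $f(1)=1$. Both pairings obey the same sesquilinearity rule $(g(\hbar)\alpha,h(\hbar)\beta)=g(\hbar)h(-\hbar)(\alpha,\beta)$ and satisfy $(1,1)_{\mathcal{E}^A_0}=1=(1,1)_{\mathcal{E}^B_0}$. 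Hence the two pairings agree under $f$.

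As a consistency check, $f(t^{2k}) = (-1)^k(2k-1)!!\hbar^k$ matches the representative of $t^{2k}$ in $H(A[[\hbar]],\Delta)$ stated earlier. It also reproduces the displayed formula for $(t^{2k},t^{2l})_{\mathcal{E}^A}$ via $(f(t^{2k}),f(t^{2l}))_{\mathcal{E}^B}$.
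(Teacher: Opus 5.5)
Your proposal is correct and takes essentially the paper's approach. Like the paper, you view $f$ on $\C[t]$ as a Gaussian expectation with variance $-\hbar$ and bound $\kappa_n(f)$ by noting that a connected Wick/Feynman graph on $n$ vertices has at least $n-1$ propagators, each carrying a factor of $\hbar$; quasi-isomorphism and compatibility are then checked directly. Your differences are minor refinements: you use the formal identity $f(e^{Jt})=e^{-\hbar J^2/2}$ with the Leonov--Shiryaev formula instead of an actual integral with $\hbar<0$, you spell out the chain-map identity and the vanishing of cumulants with a $\partial_t$ insertion (which the paper leaves implicit), and you reduce compatibility to $(1,1)=1$ because $\mathcal{E}^A_0$ has rank one.
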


\begin{proof}
	It is easy to check such defined $f$ is a chain map and satisfies $f(1) = 1$.
	Moreover, the map $f_0: (A, \Delta_0) \to (\C, 0)$ is a quasi-isomorphism, so it remains only to verify the cumulant condition.
	Since the definition of $f$ is purely algebraic, we may, without loss of generality, assume that we are working over real numbers and that $\hbar < 0$.
	By explicit calculation of Gaussian integral or using the Wick theorem, we have
	\begin{align} \label{gaussian}
		\frac{1}{\sqrt{-2\pi \hbar}} \int_{\mathbb{R}} t^{2k} \, e^{\frac{t^2}{2 \hbar}} \td t = (2k-1)!! (-\hbar)^k,
	\end{align}
	and
	\begin{align*}
		\frac{1}{\sqrt{-2\pi \hbar}} \int_{\mathbb{R}} t^{2k+1} \, e^{\frac{t^2}{2 \hbar}} \td t = 0.
	\end{align*}
	The right-hand side of (\ref{gaussian}) is precisely $f_k (t^{2k}) \hbar^k$, so the map $f$ can be equivalently defined as
	\begin{align*}
		f(g(t)) \partial_t) &= 0, \\
		f(g(t)) &= \frac{1}{\sqrt{-2\pi \hbar}} \int_{\mathbb{R}} g(t) e^{t^{2}/(2\hbar)} \mathrm{d}t.
	\end{align*}
	When acting on $\C[t]$, the cumulants $\kappa_n(f)$ can be written as 
	\begin{align}
		&\kappa_n(f)(a_1, \cdots,a_n) \nonumber \\ 
		=& \frac{\partial^n}{\partial J_1 \cdots \partial J_n} \log \large[\frac{1}{\sqrt{-2\pi \hbar}}\int_{\mathbb{R}}  \, e^{\frac{t^2}{2 \hbar}+ J_1 a_1 + J_2 a_2 +\cdots +J_n a_n} \td t \large] \big|_{J_1=\cdots=J_n =0}. \label{Feynman}
	\end{align}
	Now the a theorem on Feynman diagram (see for example Section 1.2 of \cite{Z}) enables us to rewrite the term $\log [\cdots]$ in (\ref{Feynman}) as a weighted sum over \emph{connected} Feynman's diagrams with $n$ vertices.
	Each such diagram has at least $n-1$ internal edges and each edge contributes a factor of $-\hbar$ to the weight, hence every term in the sum carries an overall factor of $\hbar^{n-1}$. It follows then
	\begin{align*}
		\kappa_n(f)(a_1,\cdots,a_n) \equiv 0 ~ (\text{mod } \hbar^{n-1})
	\end{align*}
	for all $n \geq 2$ and $a_1,\cdots,a_n \in A$ and the cumulant condition holds.
	
	The last statement follows from $\C[[\hbar]]$-linearity of $f$ and the explicit pairing given above, the only nontrivial identity is
	\begin{align*}
		(f(t^{2k}),f(t^{2l}))_{\mathcal{E}^B} =& (f_k (t^{2k}) \hbar^k, f_l (t^{2l}) \hbar^l)_{\mathcal{E}^B} \\
		=& ((2k-1)!! (-\hbar)^k, (2l-1)!! (-\hbar)^l)_{\mathcal{E}^B} \\
		=& (-1)^k \hbar^{k+l} (2k-1)!! (2l-1)!! \\
		=& (t^{2k},t^{2l})_{\mathcal{E}^A}.
	\end{align*}
\end{proof}

It's easy to show the Frobenius manifold structure on $\mathcal{M}^B$ is trivial, then by Theorem \ref{main theorem} we recover the well-known fact that the Frobenius manifold arising from the universal deformation of $A_1$ singularity is trivial.
The general case, in which $W$ is a Morse-Bott function with the critical locus being a compact Calabi-Yau manifold, will be discussed in a separate paper.

\Addresses

\end{document}